\newtheorem{thm}{Theorem}[section]
\newtheorem{lem}[thm]{Lemma}
\theoremstyle{definition}
\newcommand{\scr}[1]{\mathscr #1}
\definecolor{wco}{rgb}{0.5,0.2,0.3}
\numberwithin{equation}{section} \theoremstyle{remark}
\newtheorem{rem}{Remark}[section]
\newcommand{\ua}{\uparrow}
\renewcommand{\hat}{\widehat}
\title{{\bf  Convergence Rate of Euler-Maruyama Scheme for SDDEs of Neutral Type}
}
\author{ Yanting Ji,\thanks{Department of
		Mathematics, Swansea University, Singleton Park, SA2 8PP, UK,
		mathjyt@gmail.com} \and Jianhai Bao,\thanks{School of Mathematics
		and Statistics, Central South University, China,
		jianhaibao13@gmail.com}  \and Chenggui Yuan \thanks{Department of
Mathematics, Swansea University, Singleton Park, SA2 8PP, UK,
C.Yuan@swansea.ac.uk}}
\date{}
\begin{document}
\def\R{\mathbb R}  \def\ff{\frac} \def\ss{\sqrt} \def\B{\mathbf
B}
\def\N{\mathbb N} \def\kk{\kappa} \def\m{{\bf m}}
\def\dd{\delta} \def\DD{\Dd} \def\vv{\varepsilon} \def\rr{\rho}
\def\<{\langle} \def\>{\rangle} \def\GG{\Gamma}
  \def\nn{\nabla} \def\pp{\partial} \def\EE{\scr E}
\def\d{\text{\rm{d}}} \def\bb{\beta} \def\aa{\alpha} \def\D{\scr D}
  \def\si{\sigma} \def\ess{\text{\rm{ess}}}
\def\beg{\begin} \def\beq{\begin{equation}}  \def\F{\scr F}
\def\Ric{\text{\rm{Ric}}} \def\Hess{\text{\rm{Hess}}}
\def\e{\text{\rm{e}}} \def\ua{\underline a} \def\OO{\Omega}  \def\oo{\omega}
 \def\tt{\tilde} \def\Ric{\text{\rm{Ric}}}
\def\cut{\text{\rm{cut}}} \def\P{\mathbb P} \def\ifn{I_n(f^{\bigotimes n})}
\def\C{\scr C}      \def\aaa{\mathbf{r}}     \def\r{r}
\def\gap{\text{\rm{gap}}} \def\prr{\pi_{{\bf m},\varrho}}  \def\r{\mathbf r}
\def\Z{\mathbb Z} \def\vrr{\varrho} \def\l{\lambda}
\def\L{\scr L}\def\Tt{\tt} \def\TT{\tt}\def\II{\mathbb I}
\def\i{{\rm in}}\def\Sect{{\rm Sect}}\def\E{\mathbb E} \def\H{\mathbb H}
\def\M{\scr M}\def\Q{\mathbb Q} \def\texto{\text{o}} \def\LL{\Lambda}
\def\Rank{{\rm Rank}} \def\B{\scr B} \def\i{{\rm i}} \def\HR{\hat{\R}^d}
\def\to{\rightarrow}\def\l{\ell}\def\lf{\lfloor}\def\rf{\rfloor}
\def\8{\infty}\def\ee{\epsilon} \def\Y{\mathbb{Y}} \def\lf{\lfloor}
\def\rf{\rfloor}\def\3{\triangle} \def\O{\mathcal {O}}
\def\SS{\mathbb{S}}\def\ta{\theta}\def\h{\hat}
\def\1{\lesssim}
\def\la{\langle}\def\ra{\rangle}

\def\trace{\text{\rm{trace}}}
\renewcommand{\bar}{\overline}
\def\Y{\mathbb{Y}}
\renewcommand{\tilde}{\widetilde}

\maketitle

\begin{abstract}
In this paper, we are   concerned with  convergence rate of
Euler-Maruyama (EM)
 scheme for stochastic differential delay equations (SDDEs) of {\it neutral type},
 where the neutral term, the drift term and the diffusion term are allowed
 to be of polynomial growth. More precisely, for SDDEs of neutral
 type driven by Brownian motions, we reveal that the convergence
 rate of the corresponding EM scheme is one half; Whereas  for SDDEs of neutral
 type driven by jump processes, we show that the best  convergence
 rate of the associated EM scheme is close to one half.

\smallskip

\noindent AMS subject Classification:\   65C30,  60H10.
    \smallskip

\noindent {\it Key Words}: stochastic differential delay equation of
neutral type, polynomial condition, Euler scheme, convergence rate,
jump processes.
\end{abstract}

\section{Introduction}
There is numerous literature concerned with convergence rate of
numerical schemes for stochastic differential equations (SDEs).
Under a H\"older condition, Gy\"ongy-R\'asonyi \cite{Gyo} provided a
convergence rate of EM scheme; Under the Khasminskii-type condition,
Mao \cite{Mao16} revealed that the convergence rate of the truncated
EM method is close to one half; Sabanis \cite{S13} recovered the
classical rate of convergence (i.e., one half)  for the tamed Euler
schemes, where, for the SDE involved,  the drift coefficient
satisfies a one-side Lipschitz condition and a polynomial Lipschitz
condition, and the diffusion term is Lipschitzian. There is also
some literature on convergence rate of numerical schemes for
stochastic functional differential equations (SFDEs). For example,
under a log-Lipschitz condition, Bao et al. \cite{Bao11} studied
convergence rate of EM approximation for a range of SFDEs driven by
jump processes; Bao-Yuan \cite{Bao} investigated convergence rate of
EM approach for a class of SDDEs, where the drift and diffusion
coefficients are allowed to be polynomial growth with respect to the delay
variables; Gy\"ongy-Sabanis \cite{GS13} discussed   rate of almost
sure convergence of Euler approximations for SDDEs under
monotonicity conditions.

Increasingly real-world systems are modelled by SFDEs of neutral
type,
 as they
represent systems which evolve in a random environment and whose
evolution depends on the past states and derivatives of states of
the systems through either memory or time delay. In the last decade,
for  SFDEs of neutral type, there are a large number of papers on,
e.g.,  stochastic stability (see, e.g., \cite{BHY,Mao,MSY}), on
large fluctuations (see, e.g., \cite{AAM}), on large deviation
principle (see, e.g., \cite{BY}), on transportation inequality (see,
e.g., \cite{BWY}), to name a few.

So far, the topic on numerical approximations for SFDEs of neutral
type has also been investigated considerably. For instance, under a
global Lipschitz condition, Wu-Mao \cite{Wu} revealed convergence
rate of the   EM scheme constructed is close to one half; under a
log-Lipschitz condition, Jiang et al. \cite{Jiang} generalized
\cite{YM} by Yuan and Mao to the neutral case; under the
Khasminskill-type condition, following the line of Yuan-Glover
\cite{YG}, \cite{M11,Zhou1} studied convergence in probability of
the associated EM scheme; for preserving stochastic stability (of
the exact solutions) of variable numerical schemes, we refer to,
e.g., \cite{LC,Yu,Zhou,Zong} and references therein.

We remark that most of the existing literature on convergence rate
of EM scheme for SFDEs of neutral type is dealt with under the
global Lipschitz condition, where, in particular,   the neutral term
is contractive. Consider the following SDDE of neutral type
\begin{equation}\label{example}
\d\{X(t)-X^{2}(t-\tau)\}  = \{aX(t)+bX^{3}(t-\tau)\}\d
t+cX^{2}(t-\tau)\d B(t),~t\ge0,
\end{equation}
in which $a,b,c \in\mathbb{R}$, $\tau >0$ is some constant, and
$B(t)$ is  a scalar Brownian motion. Observe that all the neutral,
drift and diffusion coefficients in \eqref{example} are highly
linear with respect to the delay variable so that the existing
results on convergence rate of EM schemes associated with SFDEs of
neutral type cannot be applied to the example above. So in this paper we
intend to establish the  theory on convergence rate of EM scheme for
a class of SDDEs of neutral type, where, in particular, the neutral
term is of polynomial growth, so that it covers more interesting
models.

Throughout the paper, the shorthand notation $a\lesssim b$ is used to express that there exists a positive constant $c$ such that $a\le cb,$ where $c$ is a generic constant whose value may change from line to line. Let $(\Omega,\mathcal{F},\P)$ be a complete probability space with a filtration $(\mathcal{F}_t)_{t\geq 0}$
satisfying the usual conditions (i.e., it is right continuous and
$\mathcal{F}_{0}$ contains all $\P$-null sets). For each integer
$n\ge1$, let $(\R^n, \<\cdot,\cdot\>, |\cdot|)$ be an
$n$-dimensional Euclidean space. For $A\in\R^n\otimes\R^m$, the
collection of all $n\times m$ matrices, $\|A\|$ stands for the
Hilbert-Schmidt norm, i.e., $\|A\|=(\sum_{i=1}^m|Ae_i|^2)^{1/2}$,
where $(e_i)_{i\ge1}$ is the orthogonal basis of $\R^m.$ For
$\tau>0$, which is referred to as delay or memory,
$\C:=C([-\tau,0];\mathbb{R}^{n})$ means the space of all continuous
functions $\phi:[-\tau,0]\mapsto\mathbb{R}^{n}$ with the uniform
norm $\|\phi\|_{\infty}:=\sup_{-\tau\leq\theta\leq 0}|\phi(\theta)|$. Let
$(B(t))_{t\ge0}$ be a standard $m$-dimensional Brownian motion
defined on the probability space
$(\Omega,\mathcal{F},(\mathcal{F}_t)_{t\ge0},\P)$.

To begin,  we focus on an SDDE  of neutral type on $(\R^n,
\<\cdot,\cdot\>, |\cdot|)$ in the form
\begin{equation}\label{NSDDE}
\d\{X(t)-G(X(t-\tau))\} = b(X(t),X(t-\tau))\d
t+\sigma(X(t),X(t-\tau))\d B(t),~t>0
\end{equation}
with the initial value $X(\theta)=\xi(\theta)$ for
$\theta\in[-\tau,0],$ where $G:\R^n\mapsto\R^n$,
$b:\R^n\times\R^n\mapsto\R^n$, $\si:\R^n\times\R^n\mapsto\R^{n\times
m}$.

We assume that there exist constants $L>0$ and $q\ge1$ such that,
for $x,y,\bar{x},\bar{y}\in \mathbb{R}^{n}$,
\begin{description}

\item[(A1)]$ |G(y)-G(\bar{y})|\leq L(1+|y|^q+|\bar y|^q)|y-\bar{y}|;$

\item[(A2)]$
|b(x,y)-b(\bar{x},\bar{y})|+\|\sigma(x,y)-\sigma(\bar{x},\bar{y})\|
\leq L|x-\bar{x}|+L(1+|y|^q+|\bar y|^q)|y-\bar{y}|$, where
$\|\cdot\|$ stands for the Hilbert-Schmidt norm;
\item[(A3)] $|\xi(t)-\xi(s)|\leq L|t-s|$ for any $s,t\in[-\tau,0]$.
\end{description}

\begin{rem}
{\rm There are some examples such that {\bf(A1)} and {\bf(A2)} hold.
For instance, if $G(y)=y^2, b(x,y)=\si(x,y)=ax+y^3$ for any
$x,y\in\R$ and some $a\in\R,$ Then  both  {\bf(A1)} and {\bf(A2)}
hold  by taking $V(y,\bar y)=1+\ff{3}{2}y^2+\ff{3}{2}\bar y^2$ for
arbitrary $ y, \bar y\in\R$.}
\end{rem}

By following a similar argument to  \cite[Theorem 3.1, p.210]{Mao},
\eqref{NSDDE} has a unique solution $\{X(t)\}$ under ({\bf A1}) and
({\bf A2}). In the sequel,
 we introduce the EM scheme associated with (\ref{NSDDE}). Without loss of generality, we assume that $h=T/M=\tau/m\in(0,1)$
for some integers $M,m>1$. For every integer $k = -m,\cdots,0,$ set
$Y_{h}^{(k)}:=\xi(kh),$ and for each integer $k = 1, \cdots, M-1,$
we define
\begin{equation}\label{DEulerschemeStep}
Y_{h}^{(k+1)}-G(Y_{h}^{(k+1-m)})=
Y_{h}^{(k)}-G(Y_{h}^{(k-m)})+b(Y_{h}^{(k)},Y_{h}^{(k-m)})h+\sigma(Y_{h}^{(k)},Y_{h}^{(k-m)})\Delta
B^{(k)}_{h},
\end{equation}
where $\Delta B^{(k)}_{h}:= B((k+1)h)-B(kh).$ For any
$t\in[kh,(k+1)h),$ set $\bar{Y}(t): = Y_{h}^{(k)}.$ To avoid the
complex calculation, we define the continuous-time EM approximation
 solution $Y(t)$ as below: For any
$\theta\in[-\tau,0],$ $Y(\theta) = \xi(\theta), $ and
\begin{equation}\label{CEulerscheme}
\begin{split}
Y(t)=& G(\bar{Y}(t-\tau))+\xi(0)-G(\xi(-\tau))+\int_{0}^{t}b(\bar{Y}(s),\bar{Y}(s-\tau))\d s\\
&+\int_{0}^{t}\sigma(\bar{Y}(s),\bar{Y}(s-\tau))\d B(s),
~~~~~t\in[0,T].
\end{split}
\end{equation}
A straightforward calculation shows that the continuous-time EM
approximate solution $Y(t)$ coincides with the discrete-time
approximation solution $\bar{Y}(t)$ at the grid points $t = nh.$

\smallskip

 The first main result in this paper is stated as below.
\begin{thm}\label{ConvergenceRate}
    {\rm Under the  assumptions {\bf (A1)}-{\bf (A3)},
        \begin{equation}\label{0}
        \E\bigg(\sup_{0\le t\le
        T}|X(t)-Y(t)|^{p}\bigg)\1h^{p/2},~~~p\geq2.
        \end{equation}
So the convergence rate of the EM scheme (i.e.,
\eqref{CEulerscheme}) associated with (\ref{NSDDE}) is one half.
 }
\end{thm}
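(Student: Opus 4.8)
The plan is to combine uniform‑in‑$h$ moment bounds of \emph{all} orders for the exact solution and for the Euler approximation with a one‑step regularity estimate, and then to run a Gronwall argument window by window along the delay mesh $[0,\tau],[\tau,2\tau],\dots$; the point is that on $[0,\tau]$ the delayed arguments of \eqref{NSDDE} and \eqref{CEulerscheme} are frozen at the Lipschitz (hence bounded) initial segment $\xi$, and on the $j$‑th window they are the solution, resp. the approximation, restricted to the $(j-1)$‑th window, for which the error has already been controlled.

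\emph{Step 1 (moment bounds).} First I would show that for every $p\ge2$, $\E\sup_{0\le t\le T}|X(t)|^{p}<\infty$ and $\sup_{h\in(0,1)}\E\sup_{0\le t\le T}|Y(t)|^{p}<\infty$ (equivalently for $\bar Y$, whose running supremum is dominated by that of $Y$). Both follow by the same induction over the windows: on $[0,\tau]$ the delayed argument is $\xi$ (resp. its mesh sampling), which is bounded by {\bf (A3)}, so by {\bf (A1)}--{\bf (A2)} with $\bar x=\bar y=0$ the coefficients grow at most linearly in the running variable, and the Burkholder--Davis--Gundy (BDG) inequality together with Gronwall's lemma give finiteness of all $p$‑th moments there; on $[(j-1)\tau,j\tau]$ the delayed argument has finite moments of every order by the previous step, hence $G,b,\si$ evaluated along it lie in every $L^{p}$ by the polynomial growth implied by {\bf (A1)}--{\bf (A2)}, and BDG plus Gronwall apply again. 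Since $[0,T]$ is covered by finitely many windows, this terminates.

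\emph{Step 2 (one‑step estimate).} Next I would prove $\E|Y(t)-\bar Y(t)|^{p}\1 h^{p/2}$ uniformly in $t\in[0,T]$. For $t\in[kh,(k+1)h)$ one has $t-\tau\in[(k-m)h,(k-m+1)h)$, so $\bar Y(t-\tau)=\bar Y(kh-\tau)$ and the neutral term in \eqref{CEulerscheme} is constant on that cell; hence $Y(t)-\bar Y(t)=b(Y_{h}^{(k)},Y_{h}^{(k-m)})(t-kh)+\si(Y_{h}^{(k)},Y_{h}^{(k-m)})(B(t)-B(kh))$, and BDG together with Step 1 gives the claim.

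\emph{Step 3 (error recursion).} Writing $e(t)=X(t)-Y(t)$ and subtracting \eqref{CEulerscheme} from the integrated form of \eqref{NSDDE} (the terms $\xi(0)-G(\xi(-\tau))$ cancel),
\begin{equation*}
\begin{split}
e(t)=&\,\{G(X(t-\tau))-G(\bar Y(t-\tau))\}+\int_{0}^{t}\{b(X(s),X(s-\tau))-b(\bar Y(s),\bar Y(s-\tau))\}\,\d s\\
&+\int_{0}^{t}\{\si(X(s),X(s-\tau))-\si(\bar Y(s),\bar Y(s-\tau))\}\,\d B(s),
\end{split}
\end{equation*}
and I would prove by induction on $j$ that $\E\sup_{0\le t\le (j\tau)\wedge T}|e(t)|^{p}\1 h^{p/2}$ for every $p\ge2$ simultaneously. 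For the drift/diffusion integrals, {\bf (A2)} bounds the integrand by $L|X(s)-\bar Y(s)|+L(1+|X(s-\tau)|^{q}+|\bar Y(s-\tau)|^{q})|X(s-\tau)-\bar Y(s-\tau)|$; using $|X-\bar Y|\le|X-Y|+|Y-\bar Y|$, the first piece feeds Gronwall after Step 2, while the second is treated by H\"older with conjugate exponents $a,a'$: the factor $\E\sup(1+|X|^{q}+|\bar Y|^{q})^{pa}$ is finite by Step 1, and $\E\sup_{[0,(j-1)\tau]}|X(s-\tau)-\bar Y(s-\tau)|^{pa'}\1\E\sup_{[0,(j-1)\tau]}|e|^{pa'}+h^{pa'/2}\1 h^{pa'/2}$ by the inductive hypothesis at the higher moment order $pa'$ and by Step 2. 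The neutral term I would split as $\{G(X(t-\tau))-G(X(\lfloor(t-\tau)/h\rfloor h))\}+\{G(X(\lfloor(t-\tau)/h\rfloor h))-G(\bar Y(t-\tau))\}$: the supremum in $t$ of the second part is a maximum over mesh points of a polynomially‑weighted $|X(jh)-Y(jh)|$, again disposed of by H\"older against Step 1 together with the inductive estimate on the preceding window, and the first part by {\bf (A1)} and the one‑step regularity of $X$ (which is of order $h^{1/2}$, and on the base window $[0,\tau]$ is even $\1 h$ by {\bf (A3)}, so that the neutral contribution there vanishes at grid points and is $\1 h$ in between). BDG and Gronwall on $[(j-1)\tau,j\tau]$ then close the induction, and summing over the finitely many windows yields \eqref{0}.

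The main obstacle is exactly the neutral increment $G(X(t-\tau))-G(\bar Y(t-\tau))$ under the supremum in $t$: because {\bf (A1)} provides only local Lipschitz continuity with a polynomially growing constant, $G$ cannot be passed through the expectation directly, and the polynomial weight must be decoupled from the error by H\"older and absorbed into the uniform moment bounds of Step 1. This is what forces the window induction to be carried at all moment orders at once (the H\"older exponent on the error term requires the inductive hypothesis at order $pa'$, not merely at order $p$). Keeping precise track of which moment order is needed at each stage, and ensuring that the continuous‑versus‑step discrepancy hidden in the neutral term is always transferred onto a window where the error estimate is already available, is the delicate bookkeeping; the rest is BDG, Hölder and Gronwall.
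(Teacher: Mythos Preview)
Your proposal is correct but takes a more direct path than the paper. Both proofs share the same preliminaries---the uniform moment bounds and the one-step estimate $\E|Y(t)-\bar Y(t)|^{p}\lesssim h^{p/2}$, which is exactly the paper's Lemma~2.1---and the same inductive sweep over the delay windows $[0,\tau],[\tau,2\tau],\dots$, carried simultaneously for all $p\ge2$ so that the H\"older decoupling of the polynomial weight $(1+|\cdot|^{q}+|\cdot|^{q})$ from the error closes. The divergence is in the core estimate. The paper does \emph{not} work with $e(t)$ directly; it introduces the de-neutralized error $\Lambda(t)=e(t)-\{G(X(t-\tau))-G(\bar Y(t-\tau))\}$, which \emph{is} an It\^o process, applies It\^o's formula to a Yamada--Watanabe mollification $V_{\kappa\varepsilon}(\Lambda(t))$ of the norm, controls the Hessian term via the support cutoff $\mathbf{1}_{[\varepsilon/\kappa,\varepsilon]}(|\Lambda|)$, and finally balances the auxiliary parameter by choosing $\varepsilon=h^{1/2}$. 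You instead take $|e(t)|^{p}$ straight from the subtracted integral equation, bound the three pieces by BDG and H\"older, and feed Gronwall---no It\^o, no smoothing, no extra parameter. Your route is cleaner here precisely because {\bf(A2)} makes $b$ and $\sigma$ globally Lipschitz in the \emph{current} variable $x$, so the quadratic-variation term is already of the right order; the paper's Yamada--Watanabe apparatus is the natural tool when the current-variable dependence is only H\"older (as in the reference they follow), and it gives them a template that carries over with minor changes to the jump result in their Theorem~1.2.
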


Next, we move to consider the convergence rate of EM scheme
corresponding to a class of SDDEs of neutral type driven by pure
jump processes. More precisely,  we consider an SDDEs of neutral
type
\begin{equation}\label{NSDDEJump}
\d\{X(t)-G(X(t-\tau))\}=b(X(t),X(t-\tau))\d
t+\int_{U}g(X(t-),X((t-\tau)-),u)\tilde{N}(\d u,\d t)
\end{equation}
with the initial data $X(\theta)=\xi(\theta), \theta\in[-\tau,0]$.
Herein,  $G$ and $b$ are given as in \eqref{NSDDE},
$g:\R^n\times\R^n\times U\mapsto\R^m$, where $U\in\B(\R)$;
$\tilde{N}(\d t,\d u):= N(\d t,\d u)-\d t\lambda(\d u)$ is the
compensated Poisson measure associated with the Poisson counting
measure $N(\d t,\d u)$ generated by a
 stationary $\mathcal{F}_{t}$-Poisson point process $\{p(t)\}_{t\ge0}$ on $\R$ with characteristic measure
 $\lambda(\cdot),$ i.e., $N(t,U)= \sum_{s\in D(P),s\leq t}I_{U}(p(s))$ for $U\in \mathcal{B}(\R);$
$X(t-):=\lim_{s \uparrow t} X(s).$

We assume that $b$ and $G$ such that ({\bf A1}) and ({\bf A2}) with
$\si\equiv{\bf0}_{n\times m}$ therein. We further suppose that there
exist $L_0,r>0$ such that for any $x,y,\bar{x},\bar{y}\in
\mathbb{R}^{n}$ and $u\in U,$
\begin{description}
\item[(A4)]
$|g(x,y,u)-g(\bar{x},\bar{y},u)|\leq L_0 (|x-\bar{x}|+(1+|y|^q+|\bar
y|^q)|y-\bar{y}|)|u|^r $ and $|g(0,0,u)|\leq |u|^r,$ where $q\ge1$
is the same as that in ({\bf A1}).

\item[(A5)] $\int_{U}|u|^p\lambda(du) <\infty$ for any $p\geq 2.$

\end{description}

\begin{rem}
{\rm The jump coefficient may also be highly non-linear with respect
to the delay argument; for example, $x,y\in\R,$ $u\in U$ and
$q\geq1,$ $g(x,y,u)=(x+y^{q})u$ satisfies ({\bf A5}).}
\end{rem}

By following the procedures of \eqref{DEulerschemeStep} and
\eqref{CEulerscheme}, the discrete-time EM scheme and the
continuous-time   EM approximation associated with \eqref{NSDDEJump}
are defined respectively as below:
\begin{equation}\label{DEulerschemeStepJump}
Y_{h}^{(n+1)}-G(Y_{h}^{(n+1-m)})=
Y_{h}^{(n)}-G(Y_{h}^{(n-m)})+b(Y_{h}^{(n)},Y_{h}^{(n-m)})h+g(Y_{h}^{(n)},Y_{h}^{(n-m)},u)\Delta
\tilde{N}_{nh},
\end{equation}
where $\Delta \tilde{N}_{nh}:= \tilde{N}((n+1)h,U)-\tilde{N}(nh,U),$
and
\begin{equation}\label{CEulerschemeJump}
\begin{split}
Y(t)=& G(\bar{Y}(t-\tau))+\xi(0)-G(\xi(-\tau))+\int_{0}^{t}b(\bar{Y}(s),\bar{Y}(s-\tau))\d s\\
&+\int_{0}^{t}\int_{U}g(\bar{Y}(s-),\bar{Y}((s-\tau)-),u)\tilde{N}(\d
u,\d s),
\end{split}
\end{equation}
where $\bar Y$ is defined similarly as in \eqref{CEulerscheme}.

\smallskip

Our second main result in this paper is presented as follows.
\begin{thm}\label{ConvergenceRateJump}
{\rm Under({\bf A1})-({\bf A5}) with $\si\equiv{\bf0}_{n\times m}$
therein, for any $p\geq2$ and $\theta\in(0,1)$
\begin{equation}\label{22}
\E\Big(\sup_{0\leq t\leq T}|X(t)-Y(t)|^{p}\Big)\1
h^{\frac{1}{(1+\theta)^{[T/\tau]}}}.
\end{equation}
So the best  convergence rate of EM scheme (i.e.,
\eqref{CEulerschemeJump}) associated with \eqref{NSDDEJump} is close
to one half.

}
\end{thm}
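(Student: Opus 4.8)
The plan is to induct over the $[T/\tau]+1$ successive intervals $[k\tau,(k+1)\tau]$, $k=0,1,\dots,[T/\tau]$, losing a factor $(1+\theta)$ in the exponent of $h$ at each step because the neutral coefficient $G$ is of polynomial — not linear — growth. Throughout, $p\ge2$ is fixed and all constants are allowed to deteriorate along the finitely many intervals.

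\textbf{Two preliminary ingredients.} First I would establish that, for every $p\ge2$,
\[
\sup_{0\le t\le T}\E|X(t)|^p+\sup_{h\in(0,1)}\sup_{0\le t\le T}\big(\E|Y(t)|^p+\E|\bar Y(t)|^p\big)<\8 ;
\]
since $G$ is not contractive, this has to be proved interval by interval, using $|G(y)|\1 1+|y|^{q+1}$, assumptions {\bf(A1)}, {\bf(A2)} (with $\si\equiv{\bf0}$), {\bf(A4)}, {\bf(A5)} and Kunita's inequality on $[0,\tau]$ first (where the delayed argument is the bounded datum $\xi$) and then propagating the bounds forward; the same reasoning applied to \eqref{CEulerschemeJump} produces the uniform-in-$h$ bounds on $Y$ and $\bar Y$. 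Second, the one-step estimate $\E|Y(t)-\bar Y(t)|^p\1 h$ for all $t\in[0,T]$: writing $Y(t)-\bar Y(t)=\int_{\lfloor t/h\rfloor h}^t b(\bar Y(s),\bar Y(s-\tau))\,\d s+\int_{\lfloor t/h\rfloor h}^t\int_U g(\bar Y(s-),\bar Y((s-\tau)-),u)\,\tilde N(\d u,\d s)$ and applying Hölder to the drift and Kunita to the jump term together with the moment bounds, the dominant contribution is $\E\int\int_U|g|^p\lambda(\d u)\,\d s\1 h$ — it is precisely this $h$ (in place of the $h^{p/2}$ available with Brownian noise in Theorem~\ref{ConvergenceRate}) that prevents the rate from being exactly one half.

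\textbf{The induction.} Put $Z:=X-Y$ and use the neutral splitting $Z(t)=\{[X(t)-G(X(t-\tau))]-[Y(t)-G(\bar Y(t-\tau))]\}+\{G(X(t-\tau))-G(\bar Y(t-\tau))\}=:\Xi(t)+\Psi(t)$. On $[k\tau,(k+1)\tau]$ the process $\Xi(t)-\Xi(k\tau)$ is a drift-plus-compensated-jump integral of the increments $b(X,X(\cdot-\tau))-b(\bar Y,\bar Y(\cdot-\tau))$ and $g(X(\cdot-),X((\cdot-\tau)-),u)-g(\bar Y(\cdot-),\bar Y((\cdot-\tau)-),u)$; by {\bf(A2)}, {\bf(A4)}, Kunita's inequality, the splitting $|X(s)-\bar Y(s)|\le|X(s)-Y(s)|+|Y(s)-\bar Y(s)|$ and the one-step bound, its $p$-th sup-moment is dominated by $\E|Z(k\tau)|^p+h+\int_{k\tau}^{t}\E\sup_{k\tau\le r\le s}|Z(r)|^p\,\d s$ together with a delay term of the form $\E[(1+|X(s-\tau)|^q+|\bar Y(s-\tau)|^q)^p|X(s-\tau)-\bar Y(s-\tau)|^p]$, and $\E\sup|\Psi|^p$ is controlled by the same delay term via {\bf(A1)}. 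The decisive step is Hölder's inequality with exponents $\tfrac{1+\theta}{\theta}$ and $1+\theta$: since the moment bounds make $\E(1+|X(s-\tau)|^q+|\bar Y(s-\tau)|^q)^{p(1+\theta)/\theta}$ finite, the delay term is $\1\big(\E\sup_{(k-1)\tau\le r\le k\tau}|X(r)-\bar Y(r)|^{p(1+\theta)}\big)^{1/(1+\theta)}$, which by the inductive hypothesis at level $p(1+\theta)$ and the one-step bound is $\1\big(h^{1/(1+\theta)^{k-1}}\big)^{1/(1+\theta)}=h^{1/(1+\theta)^k}$. Collecting the estimates and invoking Gronwall's lemma on $[k\tau,(k+1)\tau]$ gives, for every $p\ge2$,
\[
\E\sup_{k\tau\le t\le(k+1)\tau}|Z(t)|^p\1\E|Z(k\tau)|^p+h+h^{1/(1+\theta)^k}\1 h^{1/(1+\theta)^k},
\]
where at the last step I used $\E|Z(k\tau)|^p\le\E\sup_{(k-1)\tau\le t\le k\tau}|Z(t)|^p\1 h^{1/(1+\theta)^{k-1}}\le h^{1/(1+\theta)^k}$ and $h\le h^{1/(1+\theta)^k}$. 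The base case $[0,\tau]$ gives the bound $\1 h=h^{1/(1+\theta)^0}$ because there the delayed variable is the Lipschitz datum ({\bf(A3)}), so every delay term is $O(h)$; iterating up to $k=[T/\tau]$ and treating the final, possibly shorter, interval identically yields \eqref{22}.

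\textbf{Main obstacle.} The crux is the control of the neutral contribution $\Psi(t)=G(X(t-\tau))-G(\bar Y(t-\tau))$: because $G$ grows polynomially, the prefactor $1+|X(t-\tau)|^q+|\bar Y(t-\tau)|^q$ cannot be absorbed without the Hölder interpolation above, and each such step converts a rate $h^{1/(1+\theta)^{k-1}}$ on one delay interval into only $h^{1/(1+\theta)^k}$ on the next; compounded over the $[T/\tau]$ delay lengths contained in $[0,T]$, this is exactly the mechanism that degrades the rate from $h^{1/2}$ to $h^{1/(1+\theta)^{[T/\tau]}}$. The secondary difficulties are the moment bounds — which, $G$ being non-contractive, must themselves be built up interval by interval and require moments of order $p(q+1)$ and higher — and keeping careful track of the fact that every constant produced is $p$-, $k$- and $T$-dependent, so that $p$ must be fixed at the outset.
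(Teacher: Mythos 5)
Your proposal is correct and follows the same overall strategy as the paper: uniform moment bounds built up interval by interval, the one‑step error $\E|Y(t)-\bar Y(t)|^p\lesssim h$ (rather than $h^{p/2}$) coming from the $p$‑th order jump term, the neutral splitting of $X-Y$ into $\Lambda(t)=Z(t)-G(X(t-\tau))+G(\bar Y(t-\tau))$ plus the $G$‑difference, the H\"older interpolation with exponents $\frac{1+\theta}{\theta}$ and $1+\theta$ to absorb the polynomial prefactor $1+|X|^q+|\bar Y|^q$, and the induction over delay intervals that loses a factor $(1+\theta)$ in the exponent each time. The one genuine difference is technical: the paper runs It\^o's formula on the Yamada--Watanabe regularization $V_{\kappa\vv}(\cdot)$ of the norm (with the Lagrange mean value theorem for the jump part, and an optimization over $\vv$), whereas you estimate $\E\sup_s|\Lambda(s)|^p$ directly by Kunita/BDG applied to the drift and compensated‑jump integrals of the coefficient increments. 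Under the Lipschitz‑type hypotheses ({\bf A2}), ({\bf A4}) this direct route is legitimate and somewhat cleaner --- the smoothing device is really only needed for non‑Lipschitz coefficients, and in the paper it ends up contributing nothing after the choice of $\vv$. A second, minor, difference is your base case: on $[0,\tau]$ you exploit ({\bf A3}) to get the rate $h=h^{1/(1+\theta)^0}$ rather than the paper's $h^{1/(1+\theta)}$, so your induction ends at exponent $1/(1+\theta)^{[T/\tau]-1}$; this is at least as strong as \eqref{22} and so only improves the stated bound.
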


\begin{rem}
{\rm By a close inspection of the proof for Theorem
\ref{ConvergenceRateJump}, the conditions ({\bf A4}) and ({\bf A5})
can be replaced by: For any $p>2$ there exists $K_p,K_0>0$ and $q>1$
such that
\begin{equation*}
\begin{split}
&\int_U|g(x,y,u)|^p\lambda(\d u)\le K_p(1+|x|^p+|y|^q);\\
&\int_U|g(x,y,u)-g(\bar x,\bar y,u)|^p\lambda(\d u)\le K_p[|x-\bar
x|^p+(1+|y|^q+|\bar y|^q)|y-\bar y|^p];\\
&\int_U|g(x,y,u)|^2\lambda(\d u)\le K_0(1+|x|^2+|y|^q);\\
&\int_U|g(x,y,u)-g(\bar x,\bar y,u)|^2\lambda(\d u)\le K_0[|x-\bar
x|^2+(1+|y|^q+|\bar y|^q)|y-\bar y|^2]
\end{split}
\end{equation*}
for any $x,y,\bar x,\bar y\in\R^n.$

}
\end{rem}

\section{Proof of Theorem \ref{ConvergenceRate}}

The lemma below  provides estimates of  the $p$-th moment of the
 solution to \eqref{NSDDE} and the corresponding EM scheme, alongside with the $p$-th moment
of the displacement.

\begin{lem}\label{L0}
{\rm Under ({\bf A1}) and ({\bf A2}),  for any $p\ge 2$ there exists
a constant $C_T>0$ such that
\begin{equation}\label{00}
\E\Big(\sup_{0\le t\le T}|X(t)|^p\Big)\vee\E\Big(\sup_{0\leq t\leq T}|Y(t)|^{p}\Big)\leq C_T,
\end{equation}
and
\begin{equation}\label{eq4}
\E\Big(\sup_{0\le t\le T}|\GG(t)|^{p}\Big)\1h^{p/2},
\end{equation}
where $\GG(t):=Y(t)-\bar{Y}(t)$.
}
\end{lem}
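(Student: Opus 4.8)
The plan is to establish the three bounds in the order $\E\sup|X(t)|^p \le C_T$, then $\E\sup|Y(t)|^p \le C_T$, then the displacement estimate \eqref{eq4}, since the last one uses the second. I would first handle the exact solution. Write $X(t)-G(X(t-\tau)) = \xi(0)-G(\xi(-\tau)) + \int_0^t b\,\d s + \int_0^t \sigma\,\d B(s)$, take $p$-th moments, and apply the elementary inequality $|a+b|^p \le (1+\varepsilon)|b|^p + C_\varepsilon|a|^p$ to peel off the neutral term $G(X(t-\tau))$. The point of assumption \textbf{(A1)} is that on $[0,\tau]$ the delayed argument $X(t-\tau)=\xi(t-\tau)$ is just the (bounded, Lipschitz) initial datum, so $G(X(t-\tau))$ is bounded there; then one proceeds interval by interval $[\,k\tau,(k+1)\tau]$, using that $\E\sup_{[0,k\tau]}|X|^p$ controls $\E\sup|G(X(\cdot-\tau))|^p$ on the next interval via \textbf{(A1)} together with the linear-growth consequences of \textbf{(A1)}--\textbf{(A2)} (namely $|G(y)|\lesssim 1+|y|^{q+1}$, $|b(x,y)|+\|\sigma(x,y)\|\lesssim 1+|x|+|y|^{q+1}$). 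For the $\d s$ integral use H\"older, for the $\d B$ integral use Burkholder--Davis--Gundy, and close with Gronwall on each subinterval; the finitely many iterations over $[0,T]$ give $C_T$. (Strictly, one should first do this computation with a stopping-time truncation $\tau_R=\inf\{t:|X(t)|\ge R\}$ and let $R\to\infty$ by Fatou, to justify the a priori finiteness of the moments.)

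For the EM approximation $Y(t)$ in \eqref{CEulerscheme} the argument is essentially identical: the same decomposition holds with $\bar Y$ in place of $X$ inside the coefficients, and since $|\bar Y(s)|\le \sup_{u\le s}|Y(u)|$ and $|\bar Y(s-\tau)|\le \sup_{u\le s-\tau}|Y(u)|\vee\|\xi\|_\infty$, every bound used for $X$ goes through verbatim with $X$ replaced by $\sup|Y|$ on the relevant interval. BDG applies to the stochastic integral $\int_0^t\sigma(\bar Y(s),\bar Y(s-\tau))\,\d B(s)$ exactly as before. So again one gets \eqref{00} by the same interval-by-interval Gronwall iteration.

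For \eqref{eq4}, fix $t\in[kh,(k+1)h)$ so that $\GG(t)=Y(t)-Y_h^{(k)}=Y(t)-Y(kh)$. From \eqref{CEulerscheme},
\[
Y(t)-Y(kh)=\big(G(\bar Y(t-\tau))-G(\bar Y(kh-\tau))\big)+\int_{kh}^{t}b(\bar Y(s),\bar Y(s-\tau))\,\d s+\int_{kh}^{t}\sigma(\bar Y(s),\bar Y(s-\tau))\,\d B(s).
\]
The middle term contributes $\lesssim h^p$ after H\"older, and the stochastic term contributes $\lesssim h^{p/2}$ after BDG, both using \eqref{00} to bound $\E\sup(1+|\bar Y|+|\bar Y(\cdot-\tau)|^{q+1})^p\le C_T$. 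The genuinely delicate point, and the main obstacle, is the neutral increment $G(\bar Y(t-\tau))-G(\bar Y(kh-\tau))$: by \textbf{(A1)} this is bounded by $L(1+|\bar Y(t-\tau)|^q+|\bar Y(kh-\tau)|^q)\,|\bar Y(t-\tau)-\bar Y(kh-\tau)|$, and $\bar Y(t-\tau)-\bar Y(kh-\tau)$ is itself a displacement term but \emph{shifted back by one delay} — it equals $\GG(\cdot)$ evaluated near time $t-\tau$ (or is zero, or is an increment of the Lipschitz initial datum, if $t-\tau<0$, which handles the base interval). One resolves this by induction on the delay-interval index: on $[0,\tau]$ the shifted term involves only $\xi$ and is $\lesssim h$ by \textbf{(A3)}; assuming the $h^{p/2}$ bound on $[0,k\tau]$, on $[k\tau,(k+1)\tau]$ one bounds the neutral increment using the inductive estimate and Cauchy--Schwarz to split off the polynomial prefactor (whose moments are controlled by \eqref{00}). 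Care is needed that the polynomial weight $(1+|\bar Y|^q)$ does not destroy the rate — one uses $\E(A^{2q}\cdot|\GG_{\text{shifted}}|^{2p})^{1/2}$-type splittings together with \eqref{00} and the inductive hypothesis, so the $h^{p/2}$ order is preserved through the finitely many ($\le \lceil T/\tau\rceil$) iterations. Finally take $\sup_{0\le t\le T}$ and expectation, noting the supremum over $[0,T]$ is the max over the finitely many grid cells, to conclude \eqref{eq4}.
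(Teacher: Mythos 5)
Your treatment of the moment bounds \eqref{00} is essentially the paper's argument: growth bounds $|G(y)|\lesssim 1+|y|^{1+q}$, $|b(x,y)|+\|\sigma(x,y)\|\lesssim 1+|x|+|y|^{1+q}$, H\"older and B--D--G, then Gronwall iterated over the delay intervals $[k\tau,(k+1)\tau]$. One point you should state more carefully: because of the polynomial growth in the delayed variable, the $p$-th moment on $[0,(k+1)\tau]$ is controlled by the $p(1+q)$-th moment on $[0,k\tau]$, not by the $p$-th moment; the induction therefore has to be run with an escalating exponent ($p$, $p(1+q)$, $p(1+q)^2,\dots$), which closes only because the claim is proved for all $p\ge2$ simultaneously and $[0,T]$ contains finitely many delay intervals. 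This is exactly how the paper closes the loop.

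Where you diverge from the paper is in \eqref{eq4}, and the ``genuinely delicate point'' you isolate is in fact vacuous. Since $h=\tau/m$, for $t\in[kh,(k+1)h)$ both $t-\tau$ and $kh-\tau$ lie in the same grid cell $[(k-m)h,(k-m+1)h)$, so by the definition of the piecewise-constant interpolant, $\bar Y(t-\tau)=\bar Y(kh-\tau)=Y_h^{(k-m)}$ and the neutral increment $G(\bar Y(t-\tau))-G(\bar Y(kh-\tau))$ is identically zero. The paper's proof of \eqref{eq4} therefore consists only of the H\"older estimate on the drift integral ($\lesssim h^p$) and the B--D--G estimate on the stochastic integral ($\lesssim h^{p/2}$), combined with \eqref{00}; no induction over delay intervals is needed. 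Your identification of $\bar Y(t-\tau)-\bar Y(kh-\tau)$ with ``$\GG$ shifted by one delay'' is also not correct as stated: it is a difference of two values of $\bar Y$ (a one-step increment of the discrete scheme when the two arguments fall in different cells), not $Y-\bar Y$, so the inductive hypothesis on $\GG$ would not directly apply to it. Your fallback induction could be repaired (one-step increments of the scheme are themselves $O(h^{1/2})$ in $L^p$ by a similar recursion terminating at \textbf{(A3)}), but the whole issue disappears once one observes the term vanishes. The rest of your argument for \eqref{eq4} matches the paper.
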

\begin{proof}
We  focus only on the following estimate
\begin{equation}\label{bao}
\E\Big(\sup_{0\leq t\leq T}|Y(t)|^{p}\Big)\leq C_T
\end{equation}
for some constant $C_T>0$  since the uniform $p$-th moment of $X(t)$
in a finite time interval can be done similarly. From ({\bf A1}) and
({\bf A2}), one has
\begin{equation}\label{02}
|G(y)|\1 1+|y|^{1+q},
\end{equation}
and
\begin{equation}\label{03}
|b(x,y)|+\|\sigma(x,y)\|\1 1+|x|+|y|^{1+q}
\end{equation}
for any  $x,y\in\R^n.$ By the H\"older inequality, the
Burkhold-Davis-Gundy (B-D-G) inequality (see, e.g., \cite[Theorem
7.3, p.40]{Mao}),  we  derive from  \eqref{02} and \eqref{03}
 that
\begin{equation*}
\begin{split}
\E\Big(\sup_{-\tau\le s\le t}|Y(s)|^p\Big)
&\1 1+\|\xi\|_\8^{p(1+q)}+\E\Big(\sup_{-\tau\le s\le t-\tau}|\bar{Y}(s)|^{p(1+q)}\Big)\\
&\quad+\int_0^t\{\E|\bar{Y}(s)|^p+\E|\bar{Y}(s-\tau)|^{p(1+q)}\}\d s\\
&\11+\|\xi\|_\8^{p(1+q)}+\E\Big(\sup_{-\tau\le s\le
t-\tau}|Y(s)|^{p(1+q)}\Big)\\
&\quad+\int_0^T\E\Big(\sup_{-\tau\le r\le s}|Y(r)|^p\Big)\d s,
\end{split}
\end{equation*}
where we have used $Y(kh)=\bar Y(k h)$ in the last display. This,
together with Gronwall's inequality, yields that
\begin{equation*}
\E\Big(\sup_{0\le s\le t}|Y(s)|^p\Big)\1
 1+  \|\xi\|^{p(1+q)}_\8+\E\Big(\sup_{0\le s\le
 (t-\tau)\vee0}|X(s)|^{p(1+q)}\Big),
\end{equation*}
which further implies that
\begin{equation*}
\E\Big(\sup_{0\le t\le \tau}|X(t)|^{p}\Big)\11+ \|\xi\|^{p(1+q)}_\8,
\end{equation*}
and that
\begin{equation*}
\begin{split}
\E\Big(\sup_{0\le t\le 2\tau}|X(t)|^p\Big)&\1 1+
\E\|\xi\|^{p(1+q)}_\8+\Big(\sup_{0\le t\le
\tau}|X(t)|^{p(1+q)}\Big)\11+ \|\xi\|^{p(1+q)^{2}}_\8.
\end{split}
\end{equation*}
Thus \eqref{bao} follows from an inductive argument.

\smallskip

 Employing H\"older's inequality and BDG's inequality, we deduce
from \eqref{CEulerscheme} and \eqref{03} that
\begin{equation*}
\begin{split}
\E\Big(\sup_{0\le t\le T}|\GG(t)|^{p}\Big)
&\1\sup_{0\le k\le M-1}\Big\{\E\Big(\sup_{kh\le t \leq (k+1)h}\Big|\int_{kh}^tb(\bar{Y}(s),\bar{Y}(s-\tau))\d s\Big|^p\Big)\\
&\quad+\E\Big(\sup_{nh\le t
\leq(k+1)h}\Big|\int_{kh}^t\sigma(\bar{Y}(s),\bar{Y}(s-\tau))\d
B(s)\Big|^p\Big)\Big\}\\
&\1\sup_{0\le k\le M-1}\Big\{h^{p-1}\E\int_{kh}^{(k+1)h}|b(\bar{Y}(s),\bar{Y}(s-\tau))|^p\d s\\
&\quad+h^{\ff{p}{2}-1}\E\int_{kh}^{(k+1)h}\|\si(\bar{Y}(s),\bar{Y}(s-\tau))\|^p\d s\Big\}\\
&\1 h^{\ff{p}{2}-1}\sup_{0\le k\le M-1}\Big\{\int_{kh}^{(k+1)h}\big(1+\E|\bar{Y}(s)|^{p}+\E|\bar{Y}(s-\tau)|^{p(q+1)}\big)\d s \Big\}\\
&\1h^{\ff{p}{2}}.
\end{split}
\end{equation*}
where in the last step we have used \eqref{bao}. The desired
assertion is therefore complete.
\end{proof}

With Lemma \ref{L0} in hand, we are now in the position to finish
the proof of Theorem \ref{ConvergenceRate}.

\noindent{\bf Proof of Theorem \ref{ConvergenceRate}.} We follow the
Yamada-Watanabe approach (see, e.g., \cite{Bao}) to complete the
proof of Theorem \ref{ConvergenceRate}. For fixed $\kappa >1$ and
arbitrary $\vv \in(0,1),$ there exists a continuous non-negative
function $\varphi_{\kappa\vv}(\cdot)$  with the support
$[\vv/\kappa,\vv]$ such that
\begin{equation*}
\int_{\vv/\kappa}^\vv\varphi_{\kappa\vv}(x)\d x = 1 \quad \mbox{and}
\quad \varphi_{\kappa\vv}(x) \leq \frac{2}{x\ln\kappa}, \quad x> 0.
\end{equation*}
Set
\begin{equation*}
\phi_{\kappa\vv}(x):=
\int_{0}^{x}\int_{0}^{y}\varphi_{\kappa\vv}(z)\d z\d y, \quad x>0.
\end{equation*}
It is readily to see that $\phi_{\kappa\vv}(\cdot)$ such that
\begin{equation}\label{FunctionProperty}
x-\vv\leq \phi_{\kappa\vv}(x)\leq x, \quad x > 0.
\end{equation}
Let
\begin{equation}\label{b5}
V_{\kappa\vv}(x)= \phi_{\kappa\vv}(|x|), \quad x\in\mathbb{R}^{n}.
\end{equation}
By a straightforward calculation, it holds
\begin{equation*}
(\nabla V_{\kappa\vv})(x)=
\phi^{'}_{\kappa\vv}(|x|)|x|^{-1}x,~~~~x\in\mathbb{R}^{n}
\end{equation*}
and
\begin{equation*}
(\nabla^2
V_{\kappa\vv})(x)=\phi^{'}_{\kappa\vv}(|x|)(|x|^{2}\mbox{\bf
I}-x\otimes x)|x|^{-3}+|x|^{-2}\phi^{''}_{\kappa\vv}(|x|)x\otimes
x,~~~~x\in\mathbb{R}^{n},
\end{equation*}
where $\nabla$ and $\nabla^2$ stand for the gradient and Hessian
operators, respectively, $\mbox{\bf I}$ denotes the identical
matrix, and $x\otimes x=xx^*$ with $x^*$ being the transpose of
$x\in\R^n.$ Moreover, we have
\begin{equation}\label{FunctionBound}
|(\nabla V_{\kappa\vv})(x)|\leq 1 \quad \mbox{and} \quad
\|(\nabla^2V_{\kappa\vv})(x)\|\leq
2n\Big(1+\frac{1}{\ln\kappa}\Big)\frac{1}{|x|}
{\bf{1}}_{[\vv/\kappa,\vv]}(|x|) ,
\end{equation}
where $\bf{1}_A(\cdot)$ is the indicator function of the subset
$A\subset\R_+$.

For notation simplicity, set
\begin{equation}\label{b4}
Z(t):=X(t)-Y(t)~~\mbox{ and }~~\LL(t):=
Z(t)-G(X(t-\tau))+G(\bar{Y}(t-\tau)).
\end{equation}
In the sequel, let $t\in[0,T]$ be arbitrary and fix $p\geq2.$ Due to
$\LL(0)={\bf0}\in\R^n$  and $V_{\kappa\vv}({\bf0})=0$, an
application of It\^o's formula gives
\begin{equation*}
\begin{split}
V_{\kappa\vv}(\LL(t)) &= \int_{0}^{t}\langle (\nabla
V_{\kappa\vv})(\LL(s)),\GG_1(s)\rangle \d
s+\frac{1}{2}\int_{0}^{t}\mbox{trace}\{(\GG_2(s))^*(\nabla^2
V_{\kappa\vv})(\LL(s))\GG_2(s)\}\d s\\
&\quad+\int_{0}^{t}\langle \nabla(V_{\kappa\vv})(\LL(s)),\GG_2(s)\d B(s)\rangle \\
& =: I_{1}(t)+I_{2}(t)+I_3(t),
\end{split}
\end{equation*}
where
\begin{equation}\label{b8}
\Gamma_1(t):=b(X(t),X(t-\tau))-b(\bar{Y}(t),\bar{Y}(t-\tau))
\end{equation}
and
\begin{equation*}
\Gamma_2(t):=\si(X(t),X(t-\tau))-\si(\bar{Y}(t),\bar{Y}(t-\tau)).
\end{equation*}
Set
\begin{equation}\label{b9}
V(x,y):=1+|x|^q+| y|^q,x,y\in\R^n.
\end{equation}
According to \eqref{00}, for any $q\ge2$ there exists a constant
$C_T>0$ such that
\begin{equation}\label{b1}
\E \Big(\sup_{0\le t\le T}V(X(t-\tau),\bar{Y}(t-\tau))^q\Big)\le
C_T.
\end{equation}
Noting that
\begin{equation}\label{bao1}
X(t)-\bar{Y}(t) =\LL(t)+\Gamma(t)+ G(X(t-\tau))-G(\bar{Y}(t-\tau)),
\end{equation}
and using  H\"older's inequality and B-D-G's inequality, we get from
\eqref{FunctionBound} and ({\bf A1})-({\bf A2}) that
\begin{equation}\label{b6}
\begin{split}
\Theta(t):&=\E\Big(\sup_{0\leq s\leq
t}|I_{1}(s)|^{p}\Big)+\E\Big(\sup_{0\le
s\le t}|I_3(s)|^p\Big)\\
&\1 \int_{0}^t\{\E |\GG_1(s)|^{p}
+\E\|\GG_2(s)\|^p\}\d s\\
&\1\int_{0}^t\E|X(s)-\bar{Y}(s)|^{p}\d s+\int_{-\tau}^{t-\tau}\E
(V(X(s),\bar{Y}(s))^p|X(s)
-\bar{Y}(s)|^{p})\d s\\
&\1\int_{0}^t\E\{|\LL(s)|^{p}+|\Gamma(s)|^{p}\}\d s+
+\int_{-\tau}^{t-\tau}\E (V(X(s),\bar{Y}(s))^p|X(s)
-\bar{Y}(s)|^{p})\d s.
\end{split}
\end{equation}
Also, by H\"older's inequality, it follows from \eqref{eq4},
 ({\bf A3}) and \eqref{b1}  that
\begin{equation}\label{b2}
\begin{split}
\Theta(t)&\1\int_{0}^t\{\E|\LL(s)|^{p}+\E|\Gamma(s)|^{p}+(\E
V(X(s-\tau),\bar{Y}(s-\tau))^{2p})^{1/2}\\
&\quad\times((\E|Z(s-\tau)|^{2p})^{1/2}+(\E|\GG(s-\tau)|^{2p})^{1/2}\}\d
s\\
&\1\int_{0}^t\{\E|\LL(s)|^{p}+\E|\Gamma(s)|^{p}+(\E|Z(s-\tau)|^{2p})^{1/2}+(\E|\GG(s-\tau)|^{2p})^{1/2}\}\d
s\\
&\1\int_{0}^t\{\E|\LL(s)|^{p}+(\E|Z(s-\tau)|^{2p})^{1/2}+h^{p/2}\}\d
s.
\end{split}
\end{equation}
In the light of \eqref{FunctionBound}-\eqref{bao1}, we derive from
({\bf A1}) that
\begin{equation}\label{I2}
\begin{split}
&\E\Big(\sup_{0\leq s\leq t} |I_{2}(s)|^{p}\Big)\\
&\1\E\int_{0}^t\|(\nabla^2    V_{\kappa\vv})(\LL(s))\|^p\|\GG_2(s)\|^{2p}\d s\\
&\1\E\int_{0}^{t}\frac{1}{|\LL(s)|^{p}}\{|X(s)-\bar{Y}(s)|^{2p}+V(X(s-\tau),\bar{Y}(s-\tau))^{2p}\\
&\quad\times(|X(s-\tau)-\bar{Y}(s-\tau)|^{2p})\}\mbox{\bf I}_{[\vv/
\kappa,\vv]}(|\LL(s)|)\d s\\
&\1\E\int_{0}^t\frac{1}{|\LL(s)|^{p}}\{|\LL(s)|^{2p}+|\GG(s)|^{2p}+|G(X(s-\tau))-G(\bar{Y}(s-\tau))|^{2p}\\
&\quad+V(X(s-\tau),\bar{Y}(s-\tau))^{2p}(|X(s-\tau)-\bar{Y}(s-\tau)|^{2p})\}{\bf{I}_{[\vv/\kappa,\vv]}}(|\LL(s)|)\d s\\
&\1\E\int_{0}^t\frac{1}{|\LL(s)|^{p}}\{|\LL(s)|^{2p}+|\GG(s)|^{2p}\\
&\quad+V(X(s-\tau),\bar{Y}(s-\tau))^{2p}(|X(s-\tau)-\bar{Y}(s-\tau)|^{2p})\}{\bf{I}_{[\vv/\kappa,\vv]}}(|\LL(s)|)\d s\\
&\1\E\int_{0}^{t}\{|\LL(s)|^{p}+\vv^{-p}|\Gamma(s)|^{2p}\\
&\quad+\vv^{-p}V^{2p}(X(s-\tau),\bar{Y}(s-\tau))(|X(s-\tau)-\bar{Y}(s-\tau)|^{2p})\}\d s\\
&\1\int_{0}^t\{\vv^{-p}h^{p}+\E|\LL(s)|^{p}+\vv^{-p}(\E(|Z(s-\tau)|^{4p}))^{1/2}\}\d
s,
\end{split}
\end{equation}
where in the last step we have used H\"older's inequality. Now,
according to  \eqref{FunctionProperty}, \eqref{b2} and \eqref{I2},
one has
\begin{equation*}
\begin{split}
\E\Big(\sup_{0\leq s\leq t} |\LL(s)|^{p}\Big)&\1 \epsilon^{p}+\E\Big(\sup_{0\leq s\leq t}V_{\kappa\vv}(\LL(s))\Big)\\
&\1 \epsilon^{p}+\Theta(t)+\E\Big(\sup_{0\leq s\leq t}
|I_{3}(s)|^{p}\Big)\\
&\1\vv^p+\int_{0}^t\{h^{p/2}+\vv^{-p}h^{p}+\E\Big(\sup_{0\leq r\leq
s}|\LL(r)|^p\Big)\\
&\quad+(\E(|Z(s-\tau)|^{2p}))^{1/2}+\vv^{-p}(\E(|Z(s-\tau)|^{4p}))^{1/2}\}\d
s.
\end{split}
\end{equation*}
Thus, Gronwall's inequality  gives that
\begin{equation}\label{eq2}
\begin{split}
\E\Big(\sup_{0\le s\le t}|\LL(s)|^p\Big)
&\1\vv^p+h^{p/2}+\vv^{-p}h^{p}\\
&\quad+\int_0^{(t-\tau)\vee0}\{(\E(|Z(s)|^{2p}))^{1/2}+\vv^{-p}(\E(|Z(s)|^{4p}))^{1/2}\}\d s\\
&\leq h^{p/2}+\int_0^{(t-\tau)\vee0}\{(\E(|Z(s)|^{2p}))^{1/2}+\vv^{-p}(\E(|Z(s)|^{4p}))^{1/2}\}\d s,\\
\end{split}
\end{equation}
by choosing $\vv=h^{1/2}$ and taking $|Z(t)|\equiv0$ for
$t\in[-\tau,0]$ into account. Next,  by  ({\bf A1}) and \eqref{b1}
it follows from H\"older's inequality that
\begin{equation}\label{eq3}
\begin{split}
&\E\Big(\sup_{0\leq t\leq T}|Z(t)|^{p}\Big)\1 \E\Big(\sup_{0\leq
t\leq T} |\LL(t)|^{p}\Big)+\E\Big(\sup_{-\tau\leq t\leq
T-\tau}|G(X(t))
-G(\bar{Y}(t))|^{p}\Big)\\
&\1 \E\Big(\sup_{0\leq t\leq T}
|\LL(t)|^{p}\Big)+\E\Big(\sup_{-\tau\leq t\leq
T-\tau}(V(X(t),\bar{Y}(t))^{p}|X(t)-
\bar{Y}(t)|^{p})\Big)\\
&\1 \E\Big(\sup_{0\leq t\leq T} |\LL(t)|^{p}\Big)+h^{p/2}+ \Big(\E\Big(\sup_{0\leq t\leq (T-\tau)\vee0}|Z(t)|^{2p}\Big)\Big)^{1/2}.\\
\end{split}
\end{equation}
Substituting \eqref{eq2} into \eqref{eq3} yields that
\begin{equation}\label{eq6}
\begin{split}
\E\Big(\sup_{0\le t\le T}|Z(t)|^p\Big)& \1h^{p/2}+ \Big(\E\Big(\sup_{0\leq t\leq (T-\tau)\vee0}|Z(t)|^{2p}\Big)\Big)^{1/2}\\
&+\int_{0}^{(T-\tau)\vee0}\{(\E(|Z(t)|^{2p}))^{1/2}+\vv^{-p}(\E(|Z(t)|^{4p}))^{1/2}\}\d
t.
\end{split}
\end{equation}
 Hence, we have
\begin{equation*}
\E\Big(\sup_{0\le t\le \tau}|Z(t)|^p\Big) \1 h^{p/2},
\end{equation*}
and
\begin{equation*}
\begin{split}
\E\bigg(\sup_{0\le t\le 2\tau}|Z(t)|^p\bigg)&\1h^{p/2}+ \Big(\E\Big(\sup_{0\leq t\leq \tau}|Z(t)|^{2p}\Big)\Big)^{1/2}\\
&+\int_{0}^\tau\{(\E(|Z(t)|^{2p}))^{1/2}+\vv^{-p}(\E(|Z(t)|^{4p}))^{1/2}\}\d
t\\
& \1 h^{p/2}
\end{split}
\end{equation*}
by taking $\vv=h^{1/2}$. Thus, the desired assertion (\ref{0})
follows from an inductive argument.

\section{Proof of Theorem \ref{ConvergenceRateJump}}
Hereinafter, $(X(t))$ is the strong solution to \eqref{NSDDEJump}
and $(Y(t))$ is the  continuous-time EM scheme (i.e.,
\eqref{CEulerschemeJump}) associated with \eqref{NSDDEJump}.

The lemma below plays a crucial role in revealing convergence rate
of the EM scheme.
\begin{lem}\label{L2}
{\rm Under ({\bf A1})-({\bf A5}) with $\si\equiv{\bf0}_{n\times m}$
therein,
\begin{equation}\label{JumpBound}
\E\Big(\sup_{0\leq t\leq T}|X(t)|^{p}\Big) \vee \E\Big(\sup_{0\leq
t\leq T}|Y(t)|^{p}\Big) \leq C_T,~~~p\ge2
\end{equation}
for some constant $C_T>0$ and
\begin{equation}\label{eq14}
\E\Big(\sup_{0\le t\le T}|\GG(t)|^{p}\Big)\1h,~~~p\ge2,
\end{equation}
where $\GG(t):=Y(t)-\bar{Y}(t)$.}
\end{lem}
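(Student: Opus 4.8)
The plan is to prove Lemma~\ref{L2} by adapting the argument of Lemma~\ref{L0} to the jump setting, where the It\^o isometry / Kunita--type inequalities for compensated Poisson integrals replace the B-D-G inequality for continuous martingales. First I would establish the $p$-th moment bound \eqref{JumpBound}. From ({\bf A1}) and ({\bf A2}) (with $\si\equiv{\bf0}$) one again gets $|G(y)|\1 1+|y|^{1+q}$ and $|b(x,y)|\1 1+|x|+|y|^{1+q}$, and from ({\bf A4}) one gets $|g(x,y,u)|\1(1+|x|+|y|^{1+q})|u|^r$, so by ({\bf A5}) the quantities $\int_U|g(x,y,u)|^p\lambda(\d u)$ and $\int_U|g(x,y,u)|^2\lambda(\d u)$ are both $\1 1+|x|^p+|y|^{p(1+q)}$ (resp.\ with exponent $2$). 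Applying H\"older's inequality together with the Kunita first inequality (see e.g.\ Applebaum) to the compensated integral in \eqref{CEulerschemeJump}, one bounds $\E(\sup_{-\tau\le s\le t}|Y(s)|^p)$ by a constant plus $\|\xi\|_\8^{p(1+q)}$ plus $\E(\sup_{-\tau\le s\le t-\tau}|Y(s)|^{p(1+q)})$ plus $\int_0^T\E(\sup_{-\tau\le r\le s}|Y(r)|^p)\d s$; Gronwall's inequality and then a step-by-step induction over the intervals $[0,\tau],[\tau,2\tau],\dots,[0,T]$ (exactly as in Lemma~\ref{L0}, where each step raises the power of $\|\xi\|_\8$ by a factor $1+q$) gives the finiteness on $[0,T]$. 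The bound for $X(t)$ is identical.

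Next I would prove the displacement estimate \eqref{eq14}. Fix $k$ and $t\in[kh,(k+1)h)$; then $\GG(t)=Y(t)-\bar Y(t)$ equals $\int_{kh}^t b(\bar Y(s),\bar Y(s-\tau))\d s+\int_{kh}^t\int_U g(\bar Y(s-),\bar Y((s-\tau)-),u)\tilde N(\d u,\d s)$. For the drift term, H\"older in time gives a contribution $\1 h^{p-1}\int_{kh}^{(k+1)h}\E|b(\cdots)|^p\d s\1 h^p$ using \eqref{JumpBound}. For the jump term, applying the B-D-G / Kunita inequality for jump integrals over the interval of length $h$ one gets, for $p\ge2$, a bound of the form $\1\E\int_{kh}^{(k+1)h}\int_U|g(\cdots,u)|^p\lambda(\d u)\d s+\E\big(\int_{kh}^{(k+1)h}\int_U|g(\cdots,u)|^2\lambda(\d u)\d s\big)^{p/2}$, and by \eqref{JumpBound} and ({\bf A5}) the first piece is $\1 h$ and the second is $\1 h^{p/2}\le h$ since $h\in(0,1)$ and $p\ge2$. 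Taking the supremum over the finitely many $k$ and then over $t$ yields $\E(\sup_{0\le t\le T}|\GG(t)|^p)\1 h$, which is \eqref{eq14}.

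The main obstacle—and the reason the rate in \eqref{eq14} is $h$ rather than $h^{p/2}$—is precisely the presence of the $\lambda(\d u)$-integrated $p$-th power term $\E\int\int|g|^p\lambda(\d u)\d s\1 h$ in the Kunita estimate for jump integrals: unlike the Gaussian case, where BDG produces only the $L^2$-bracket raised to the power $p/2$ giving $h^{p/2}$, the jump BDG inequality carries an irreducible linear-in-$h$ term. This single order-$h^{1}$ loss on each delay subinterval is what, after the inductive bootstrap in the proof of Theorem~\ref{ConvergenceRateJump}, propagates into the exponent $1/(1+\theta)^{[T/\tau]}$ in \eqref{22} (the $\theta>0$ slack coming from the repeated use of H\"older to handle the higher moments $\E|Z(s-\tau)|^{2p}$, $\E|Z(s-\tau)|^{4p}$ that the polynomial neutral and jump coefficients force upon us). One should be careful to state which form of the Kunita/B-D-G inequality is being invoked and to keep track of the fact that $h<1$ is used to absorb $h^{p/2}$ into $h$; otherwise the computation is routine given Lemma~\ref{L2}'s first assertion.
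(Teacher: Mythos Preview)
Your proposal is correct and follows essentially the same route as the paper: for \eqref{JumpBound} the paper simply refers back to the argument of Lemma~\ref{L0} with the jump B-D-G inequality (Marinelli--R\"ockner) in place of the continuous one, and for \eqref{eq14} it decomposes $\GG(t)$ over $[kh,(k+1)h)$ into the drift piece ($\1 h^p$) and the compensated-jump piece ($\1 h$), then takes the supremum over $k$. The only cosmetic difference is that you display both terms of the Kunita-type inequality (the $\int|g|^p\lambda(\d u)\d s$ term and the $(\int|g|^2\lambda(\d u)\d s)^{p/2}$ term) and explicitly absorb $h^{p/2}$ into $h$, whereas the paper's displayed computation records only the first; your version is, if anything, the more careful of the two.
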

\begin{proof}
By carrying out a similar argument to that of \cite[Theorem 3.1,
p.210]{Mao}, \eqref{NSDDEJump} admits a unique strong solution
$\{X(t)\}$ according to \cite[Theorem 117, p.79]{Situ}. On the other
hand, since the proof of \eqref{L2}  is quite similar to that of
\eqref{00}, we here omit its detailed proof.

\smallskip

In the sequel, we aim to show \eqref{eq14}.  From ({\bf A4}), it
follows that
\begin{equation}\label{b3}
|g(x,y,u)|\leq C(1+|x|+|y|^{1+q})|u|^r,~~~x,y\in\R^n,~u\in U
\end{equation}
for some $C>0.$ Applying B-D-G's inequality (see, e.g., Lemma
\cite[Theorem 1]{{MAR}}) and H\"older's inequality, we derive   that
\begin{equation*}
\begin{split}
\E\Big(\sup_{0\le t\le T}|\GG(t)|^{p}\Big)
&\1\sup_{0\le k\le M-1}\Big\{\E\Big(\sup_{kh\le t\le(k+1)h}\Big|\int_{kh}^tb(\bar{Y}(s),\bar{Y}(s-\tau))\d s\Big|^p\Big)\\
&\quad+\E\Big(\sup_{kh\le t\le(k+1)h}\Big|\int_{kh}^t\int_{U}g(\bar{Y}(s-),\bar{Y}((s-\tau)-),u) \tilde{N}(\d s,\d u)\Big|^p\Big)\Big\}\\
& \1 \sup_{0\le k\le M-1}\Big\{\int_{kh}^{(k+1)h}\Big(h^{p-1}\E|b(\bar{Y}(s),\bar{Y}(s-\tau))|^{p}\\
&\quad+\int_{U}\E|g(\bar{Y}(s),\bar{Y}(s-\tau),u)|^{p}\lambda(\d u)\Big)\d s\Big\}\\
& \1 \sup_{0\le k\le M-1}\Big\{\int_{kh}^{(k+1)h}\Big(1+\E\Big(\sup_{-\tau\le r\le s}|Y(r)|^{p(1+q)}\Big)\Big)\\
& \quad\times\Big(h^{p-1}+\int_{U}|u|^{pr}\lambda(\d u)\Big)\d s\Big\}\\
&\1 h^{p}+h\\
&\1 h,
\end{split}
\end{equation*}
where we have used {\bf(A2)} with $\si\equiv{\bf0}_{n\times m}$ and
\eqref{b3} in the third step,    and  \eqref{JumpBound} and
{\bf(A5)} in the last two step, respectively. So \eqref{eq14}
follows as required.
\end{proof}

Next, we go back to finish the proof of Theorem
\ref{ConvergenceRateJump}.

 \noindent {\bf Proof of Theorem \ref{ConvergenceRateJump}.} We follow the
 idea of the proof for Theorem \ref{ConvergenceRate} to complete the proof.
Set
\begin{equation*}
\Gamma_3(t,u):= g(X(t),X(t-\tau),u)-g(\bar{Y}(t),\bar{Y}(t-\tau),u).
\end{equation*}
Applying It\^o's formula as well as the Lagrange mean value theorem
to $V_{\kappa\vv}(\cdot)$, defined by \eqref{b5},  gives that
\begin{equation*}
\begin{split}
 V_{\kappa\vv}(\LL(t)) &=
\int_{0}^{t}\langle (\nabla V_{\kappa\vv})
(\LL(s)),\Gamma_{1}(s)\rangle \d s\\
&
\quad+\int_{0}^{t}\int_{U}\{V_{\kappa\vv}(\LL(s)+\Gamma_3(s))-V_{\kappa\vv}(\LL(s))-\langle
(\nabla V_{\kappa\vv})
(\LL(s)),\Gamma_3(s)\rangle\}\lambda(\d u)\d s\\
&\quad+\int_{0}^{t}\int_{U}\{V_{\kappa\vv}(\LL(s-)+\Gamma_3(s-))-V_{\kappa\vv}(\LL(s-))\}\tilde{N}(\d u,\d s)\\
&=V_{\kappa\vv}(\LL(0))+\int_{0}^{t}\langle (\nabla V_{\kappa\vv})
(\LL(s)),\Gamma_{1}(s)\rangle \d s\\
& \quad+\int_{0}^{t}\int_{U}\Big\{\int_{0}^{1}\langle \nabla
V_{\kappa\vv}(\LL(s)+r\Gamma_3(s))
- \nabla V_{\kappa\vv}(\LL(s)),\Gamma_3(s)\rangle \d r\Big\}\lambda(\d u)\d s\\
&\quad+\int_{0}^{t}\int_{U}\Big\{\int_{0}^{1}\langle \nabla
V_{\kappa\vv}(\LL(s-)+r\Gamma_3(s-)),\Gamma_3(s-)\rangle
\d r\Big\}\tilde{N}(\d u,\d s)\\
&\quad =:J_{1}(t)+J_{2}(t)+J_3(t),
\end{split}
\end{equation*}
in which $\GG_1$ is defined as in \eqref{b8}. By  B-D-G's inequality
(see, e.g., Lemma \cite[Theorem 1]{{MAR}}), we obtain from
\eqref{FunctionBound}, \eqref{b6} with $\si\equiv{\bf0}_{n\times m}$
 therein, ({\bf A4}) and ({\bf A5}) that
\begin{equation*}
\begin{split}
\Upsilon(t):=\sum_{i=1}^3\E\Big(\sup_{0\leq s\leq
t}|J_i(s)|^{p}\Big)
&\1\int_{0}^t\E\{|\LL(s)|^{p}+|\Gamma(s)|^{p}\}\d s\\
&\quad+\int_{-\tau}^{t-\tau}\E (V(X(s),\bar{Y}(s))^p|X(s)
-\bar{Y}(s)|^{p})\d s,
\end{split}
\end{equation*}
where $V(\cdot,\cdot)$ is introduced in \eqref{b9}. Observe from
H\"older's inequality that
\begin{equation}\label{eq35}
\begin{split}
&\E V(X(s),\bar{Y}(s))^{p}|Y(s)-\bar{Y}(s)|^{p}\\
&\1 (\E V(X(s),\bar{Y}(s))^{\frac{p(1+\theta)}{\theta}})^{\frac{\theta}{1+\theta}}(\E |X(s)-\bar{Y}(s)|^{p(1+\theta)})^{\frac{1}{1+\theta}}\\
&\1(\E V(X(s),\bar{Y}(s))^{\frac{p(1+\theta)}{\theta}})^{\frac{\theta}{1+\theta}} (\E|Z(s)|^{p(1+\theta)}+\E|\Gamma(s)|^{p(1+\theta)})^{\frac{1}{1+\theta}}\\
&\1(\E(1+|X(s)|^{\frac{pq(1+\theta)}{\theta}}+|\bar{Y}(s)|^{\frac{pq(1+\theta)}{\theta}})^{\frac{\theta}{1+\theta}}(\E|Z(s)|^{p(1+\theta)}
+\E|\Gamma(s)|^{p(1+\theta)})^{\frac{1}{1+\theta}}\\
&\1 ( \E|Z(s)|^{p(1+\theta)})^{\frac{1}{1+\theta}}+( \E|\Gamma(s)|^{p(1+\theta)})^{\frac{1}{1+\theta}}\\
&\1 h^{\frac{1}{1+\theta}}+(
\E|Z(s)|^{p(1+\theta)})^{\frac{1}{1+\theta}},~~~\theta>0,
\end{split}
\end{equation}
in which we have used \eqref{JumpBound} in the penultimate display
and \eqref{eq14} in the last display, respectively. So we arrive at
\begin{equation*}
\begin{split}
\Upsilon(t)
&\1h^{\frac{1}{1+\theta}}+\int_{0}^t\E\{|\LL(s)|^{p}+|\Gamma(s)|^{p}\}\d
s+\int_{-\tau}^{t-\tau}(
\E|Z(s)|^{p(1+\theta)})^{\frac{1}{1+\theta}}\d s.
\end{split}
\end{equation*}
This,  together with \eqref{FunctionProperty} and \eqref{eq14},
implies
\begin{equation*}
\begin{split}
\E\Big(\sup_{0\leq s\leq t} |\LL(t)|^{p}\Big)&\1 \epsilon^{p}+\E\Big(\sup_{0\leq s\leq t}V_{\kappa\vv}(\LL(s))\Big)\\
&\1\epsilon^{p}+
h^{\frac{1}{1+\theta}}+\int_{0}^t\E\{|\LL(s)|^{p}+|\Gamma(s)|^{p}\}\d
s+\int_{-\tau}^{t-\tau}(
\E|Z(s)|^{p(1+\theta)})^{\frac{1}{1+\theta}}\d s\\
&\1 h^{\frac{1}{1+\theta}}+\int_{0}^t\E|\LL(s)|^{p}\d
s+\int_{-\tau}^{t-\tau}(
\E|Z(s)|^{p(1+\theta)})^{\frac{1}{1+\theta}}\d s
\end{split}
\end{equation*}
by taking $\vv=h^{\frac{1}{p(1+\theta)}}$ in the last display. Using
Gronwall's inequality, due to $Z(\theta)=0$ for
$\theta\in[-\tau,0]$, one has
\begin{equation*}
\begin{split}
\E\Big(\sup_{0\leq t\leq T}
|\LL(t)|^{p}\Big)&\1h^{\frac{1}{1+\theta}}+\int_0^{(T-\tau)\vee0}(
\E|Z(s)|^{p(1+\theta)})^{\frac{1}{1+\theta}}\d s.
\end{split}
\end{equation*}
Next, observe from ({\bf A1}) and H\"older's inequality that
\begin{equation}\label{b7}
\begin{split}
&\E\Big(\sup_{0\leq t\leq T}|Z(t)|^{p}\Big)\1 \E\Big(\sup_{0\leq
t\leq T} |\LL(t)|^{p}\Big)+\E\Big(\sup_{-\tau\leq t\leq
T-\tau}|G(X(t))
-G(\bar{Y}(t))|^{p}\Big)\\
&\1 \E\Big(\sup_{0\leq t\leq T}
|\LL(t)|^{p}\Big)+\E\Big(\sup_{-\tau\leq t\leq
T-\tau}(V(X(t),\bar{Y}(t))^{p}|X(t)-
\bar{Y}(t)|^{p})\Big)\\
&\1\E\Big(\sup_{0\leq t\leq T} |\LL(t)|^{p}\Big)\\
&\quad+\Big\{\Big(1+\E\Big(\sup_{-\tau\leq t\leq
T}|X(t)|^{\frac{pq(1+\theta)}{\theta}}\Big)+\Big(\sup_{-\tau\leq
t\leq
T}|Y(t)|^{\frac{pq(1+\theta)}{\theta}}\Big)\Big\}^{\frac{\theta}{1+\theta}}\\
&\quad\times\Big\{\E\Big(\sup_{-\tau\leq t\leq
T-\tau}|Z(t)|^{p(1+\theta)}\Big) +\E\Big(\sup_{-\tau\leq t\leq
T}|\Gamma(t)|^{p(1+\theta)}\Big)\Big\}^{\frac{1}{1+\theta}}\\
 &\1h^{\frac{1}{1+\theta}}+\Big(\E\Big(\sup_{0\leq t\leq
(T-\tau)\vee0}|Z(t)|^{p(1+\theta)}\Big)\Big)^{\frac{1}{1+\theta}},
\end{split}
\end{equation}
where in the last step we have utilized \eqref{JumpBound} and
\eqref{eq14}. So we find that
\begin{equation*}
\E\Big(\sup_{0\leq  t\leq  \tau}|Z(t)|^{p}\Big)\1
h^{\frac{1}{1+\theta}},
\end{equation*}
which, in addition to \eqref{b7}, further yields that
\begin{equation*}
\begin{split}
\E\Big(\sup_{0\leq t\leq 2\tau}|Z(t)|^{p}\Big)&\1
h^{\frac{1}{1+\theta}}+\Big(\E\Big(\sup_{0\leq t\leq
\tau}|Z(t)|^{p(1+\theta)}\Big)\Big)^{\frac{1}{1+\theta}}\\
&\1 h^{\frac{1}{(1+\theta)^{2}}}+h^{\frac{1}{1+\theta}}\\
&\1 h^{\frac{1}{(1+\theta)^{2}}}.
\end{split}
\end{equation*}
Thus, the desired assertion follows from an inductive argument.


\begin{thebibliography}{99}
{\small

\setlength{\baselineskip}{0.14in}
\parskip=0pt

\bibitem{AAM} Appleby, John A.~D., Appleby-Wu, H., Mao, X., On the almost sure running maximum of solutions of affine neutral
stochastic functional differential equations, arXiv:1310.2349v1.







\bibitem{BWY} Bao, J., Wang, F.-Y., Yuan, C., Transportation cost inequalities for neutral functional stochastic equations,
 {\it Z. Anal. Anwend.}, {\bf32} (2013),   457--475.

\bibitem{Bao} Bao,J.,  Yuan, C.,  Convergence rate of EM scheme for SDDEs. {\sl
Proc. Amer. Math. Soc., } {\bf 141}(2013),  3231--3243.

\bibitem{Bao11} Bao, J., B\"ottcher, B., Mao, X., Yuan, C., Convergence rate of numerical solutions to SFDEs with jumps,
{\it J. Comput. Appl. Math.}, {\bf236} (2011),   119--131.


\bibitem{BHY} Bao, J., Hou, Z., Yuan, C., Stability in distribution of neutral stochastic differential delay equations with Markovian
switching,
  {\it Statist. Probab. Lett.}, {\bf79} (2009),  1663--1673.



\bibitem{BY}
Bao, J., Yuan, C.,    Large deviations for neutral functional
SDEs with jumps, {\sl Stochastics},
{\bf 87}(2015), 48-70.


\bibitem{GS13}Gy\"ongy, I.,  Sabanis,  S., A Note on Euler Approximations for
Stochastic Differential Equations with Delay, {\it Appl. Math.
Optim.}, {\bf68} (2013),  391--412.




\bibitem{Gyo}
Gy\"ongy, I., R\'asonyi, M.,  A note on Euler approximations for SDEs with H\"older continuous diffusion coefficients. {\sl Stochastic Process. Appl.,} {\bf 121}(2011),2189-2200.





\bibitem{Jiang}
Jiang, F., Shen, Y., and Wu, F.,  A note on order of convergence of numerical method for neutral stochastic functional differential equations.
{\sl Commun. Nonlinear Sci. Numer. Simul.,} {\bf 17} (2012) 1194-1200.


\bibitem{LC}Li,X., Cao, W., On mean-square stability of two-step Maruyama methods for nonlinear
neutral stochastic delay differential equations, {\it Appl. Math.
Comput.},  {\bf261} (2015), 373--381.


\bibitem{Mao16} Mao, X., Convergence rates of the truncated Euler-Maruyama method for stochastic differential equations,
 {\it J. Comput. Appl. Math.}, {\bf296} (2016), 362--275.

\bibitem{Mao}
Mao, X.,  \emph{Stochastic differential equations and applications.} Second Edition. Horwood Publishing Limited, 2008. Chichester.


\bibitem{MSY} Mao, X., Shen, Y., Yuan, C., Almost surely asymptotic stability of neutral stochastic differential delay equations with Markovian
switching,
  {\it Stochastic Process. Appl.}, {\bf118} (2008),   1385--1406.


\bibitem{MAR}
 Marinelli, C.,   R\"ockner,  M.,   On Maximal inequalities for purely discountinuous martingales in infinite dimensional, S\`eminnaire de Probabilit\`es XLVI, Lecture Notes in Mathematics 2123 (2014),  293-316.


\bibitem{M11} Milosevic, M., Highly nonlinear neutral stochastic differential equations with
time-dependent delay and the Euler-Maruyama method, {\it Math.
Comput. Modelling}, {\bf54} (2011),   2235--2251.


\bibitem{S13}Sabanis, S., A note on tamed Euler approximations, {\it Electron. Commun. Probab.}, {\bf18} (2013),   1--10.


\bibitem{Situ}
Situ, R., \emph{Theory of stochastic differential equations with jumps and applications.}  Mathematical and Analytical Techniques with Applications to Engineering. Springer, 2005.  New York.

\bibitem{Wu}
Wu, F.,  Mao, X.,   Numerical solutions of neutral stochastic functional differential equations. {\sl SIAM J. Numer. Anal.,} {\bf 46} (2008), 1821-1841.


\bibitem{Yu} Yu, Z., Almost sure and mean square exponential stability of numerical
solutions for neutral stochastic functional differential equations,
{\it Int. J. Comput. Math.}, {\bf 92} (2015),  132--150.



\bibitem{YM} Yuan, C.,  Mao, X.,  A note on the rate of convergence of the Euler-Maruyama method for stochastic differential
equations, {\it Stoch. Anal. Appl.}, {\bf26} (2008),  325--333.

\bibitem{YG} Yuan, C., Glover, W., Approximate solutions of stochastic differential delay equations with Markovian
switching,
 {\it J. Comput. Appl. Math.}, {\bf194} (2006),   207--226.

\bibitem{Zhou1}  Zhou, S.,  Fang, Z.,  Numerical approximation of nonlinear neutral stochastic functional
differential equations, {\it J. Appl. Math. Comput.}, {\bf41}
(2013),
 427--445.



\bibitem{Zhou}
Zong, X., Wu, F. and Huang, C.,  Exponential mean square stability of the theta approximations for neutral stochastic differential delay equations. {\sl J. Comput. Appl. Math.,} {\bf 286} (2015), 172-€"185.

\bibitem{Zong}Zong, X., Wu, F., Exponential stability of the exact and
numerical solutions for neutral stochastic delay differential
equations,  {\it Appl. Math. Model.}, {\bf40} (2016),   19--30.


}

\end{thebibliography}
\end{document}